\documentclass[final]{aupl}

\usepackage{
amsfonts,
latexsym,
amssymb,
enumerate,
verbatim,
mathrsfs,
rotate,
centernot,
color,
}

\newcommand{\labbel}[1]{\label{#1} [[{\bf #1}]]}  
\newcommand{\bibbitem}[1]{\bibitem{#1} [[{\bf #1}]]}  
\renewcommand{\labbel}{\label} \renewcommand{\bibbitem}{\bibitem}

\newcommand{\pip}{{\pi^{\scriptscriptstyle {\restriction}}}}

\DeclareMathOperator{\cf}{cf}

\newtheorem{theorem}{Theorem}[section]

\newtheorem{proposition}[theorem]{Proposition}

\newtheorem{claim}[theorem]{Claim} 
\newtheorem*{claim*}{Claim}

\newtheorem*{theorem*}{Theorem}
\newtheorem*{proposition*}{Proposition}
\newtheorem*{corollary*}{Corollary}
\newtheorem*{lemma*}{Lemma}
\newtheorem*{scholion*}{Scholion}

\theoremstyle{definition}
\newtheorem{definition}[theorem]{Definition}

\theoremstyle{remark}
\newtheorem{remark}[theorem]{Remark}

\newtheorem*{remark*}{Remark}
\newtheorem*{remarks*}{Remarks}
 
\newtheorem{example}[theorem]{Example}

\newtheorem*{observation*}{Observation}

 \allowdisplaybreaks[1]

\numberwithin{equation}{section}

\begin{document}
\title{Ordinal semigroups}

\author{Paolo Lipparini} 
\address{Dipartimento di Matematica\\Viale della  Ricerca
 Ordinale \\Universit\`a di Roma ``Tor Vergata'' 
\\I-00133 ROME ITALY (currently retired)}

\email{lipparin@axp.mat.uniroma2.it}

\urladdr{http://www.mat.uniroma2.it/\textasciitilde lipparin}

\subjclass{08A65; 20M75; 03E10}

\keywords{Partial infinitary semigroup,  ordinal semigroup, 
non-commutative semigroup, 
generalized associative laws, sequence indexed by a well-ordered set.}

\thanks{
Work performed under the auspices of G.N.S.A.G.A. Work 
partially supported by PRIN 2012 ``Logica, Modelli e Insiemi''.
The author acknowledges the MIUR Department Project awarded to the
Department of Mathematics, University of Rome Tor Vergata, CUP
E83C18000100006.}

\begin{abstract}
In a previous paper we introduced a
version of associativity for a partial infinitary operation.
We prove here that if $\gamma$ is an infinite
ordinal and some associative infinitary operation is defined 
for all  sequences indexed by ordinals $ \leq \gamma$,
 then such an operation
can be uniquely expanded to apply to every
  sequence indexed by any ordinal  of cardinality $ |\gamma |$.

In particular, if some associative operation is defined for all finite sequences 
as well as for all $ \omega$-indexed sequences, then the operation
can be uniquely expanded to apply to every sequence indexed by a 
countable ordinal.
\end{abstract}

\maketitle

\section{Introduction} \labbel{intro}

Krob  introduced   (commutative) \emph{complete  monoids}
in \cite{K}.
The notion implicitly appeared earlier in
Conway \cite{C}. See  Hebisch  and    Weinert \cite{HW} for a survey
with many applications.
In \cite{smgrnc} we generalized the notion to the
noncommutative case, dealing also with partially defined products,
in particular, encompassing former special cases treated, among others,  by
Tarski \cite{Tord},  Madevski, Trpenovski and {\v C}upona \cite{MTC}, 
Belousov and  Stojakovi\'c \cite{BS}, Perrin and Pin \cite{PP},  
 Bedon and Carton \cite{BC}.  
Our definition in \cite{smgrnc} 
is modeled after the 
 notion of a $\diamond$-semigroup in 
 Rispal and Carton \cite{RC},
but we consider products of sequences indexed
by arbitrary linearly ordered sets,
while  Rispal and Carton
deal only with 
sequences indexed by countable  scattered  linear orders. 
See also Bedon and Rispal \cite{BR}. 
In detail  
 a (partial) \emph{infinitary semigroup}  (Definition \ref{pnc} below)
 is a structure with a partial ``product operation''  defined on certain sequences
indexed by a linearly ordered set. Such an operation is  required  to satisfy a general 
form of associativity.
 
In the present paper we deal with the special case when
the index sets are ordinals, equivalently, well-ordered sets.
We prove the extension theorem mentioned in the abstract,
see Theorem \ref{ordg} for explicit details.
Since the statement of Theorem \ref{ordg} is quite involved, the reader might wish
to consider  the particular case
$\gamma= \omega $ first. In this case the content of the theorem
is the following. If we know every finite product,
as well as every product
of sequences of order-type $ \omega$ (the order-type of the naturals)
and the properties (Ord) and (U) we will
introduce  in Definitions \ref{pnc} and \ref{pord}  are satisfied relative
 to such sequences, then
 \emph{all} products of sequences 
indexed by any countable ordinal are uniquely determined, under the request that
the properties continue to be satisfied (this is the easier part)
and the request can be actually fulfilled (the harder part).

The general case of our main result starts with an
 infinite ordinal $\gamma$
and shows that the  operation can be uniquely lifted
up to 
 all sequences indexed by any ordinal $\delta$ 
having the same cardinality  of $\gamma$.
The definition of the additional products is rather easy,
but in general they depend on the choice of cofinal sequences on the 
ordinals; delicate details have to be discussed
in order to show that the definition does not depend
on any particular choice.

\section{Basic notions} \labbel{basic} 

\begin{definition} \labbel{pnc}
\cite{smgrnc}  A (partial)  \emph{infinitary semigroup}
 is a nonempty class $S$ 
together with a class function $ \prod$
whose codomain is $S$ itself,
whose domain
 consists of a class of linearly-indexed sequences
 of elements of $S$ and which satisfies the following properties:
  \begin{enumerate}
\item[(U)]
\emph{If $I= \{ i  \} $ has one element,
then 
$\prod _{i \in I} a_i $ is defined 
and is equal to $a_i$,}
   \item[(N)]
\emph{Whenever $\prod _{i \in I} a_i $ is defined
and $\pi:I \to J$ is a surjective order preserving map,
then all products in the following equation are defined
\begin{equation*}\labbel{N}
\prod _{i \in I} a_i = \prod _{j\in J} \prod _{\pi(i) =j }  a_i
  \end{equation*}     
and equality actually holds.}
   \end{enumerate} 
where $ \prod _{i \in I} a_i $ denotes 
the image of $(a_i) _{i \in I} $  under $\prod$, when defined,
and $ \prod _{\pi(i) =j }  a_i$
means $ \prod _{i \in I_j }  a_i$, where
$I_j  = \{ i \in I \mid \pi(i) =j \}  $ is given the order 
induced by the order on $I$. 

If $I= \{ 0, 1 \} $, we will write $a_0 * a_1$
in place of  $ \prod _{i \in I} a_i $. 
As usual, when there is no risk of ambiguity, we will write
$S$ in place of $(S, \prod)$. See \cite{smgrnc}
for further details, properties and many examples. 
\end{definition}

Just a small comment about the expression ``class''.
We have stated Definition \ref{pnc} in terms of classes since many interesting
examples are proper classes,  compare Example \ref{ex} below.
The reader not interested in foundational
issues might just consider Definition \ref{pnc} as always restricted to sets,
or simply consider ``class'' as a synonymous of ``collection''.   
  
In this note we will be interested in the case when
$\prod _{i \in I} a_i $ is defined only for (certain)
well-ordered index sets $I$. See e.~g. \cite{bach,J} for 
informations about well-ordered sets and ordinals.
 Since a well-order is
order isomorphic to a unique ordinal, it is enough
to define
$\prod _{ \alpha < \delta } a_\alpha  $,
for $\delta$ an ordinal (or say that the product is not defined).
This observation leads to the following
variation of Definition \ref{pnc}. We denote intervals of ordinals in the usual way,
e.~g., $[ \alpha, \beta )= \{ \, \gamma   \mid 
 \gamma \text{ is an ordinal and } \alpha \leq \gamma < \beta   \,\}$.

\begin{definition} \labbel{pord}   
A  \emph{partial ordinal semigroup} 
 is a nonempty class $S$ together
 with a class function $ \prod$
whose codomain is $S$ and
whose domain
 consists of a class of nonempty ordinal-indexed sequences
of elements of $S$,
which satisfies (U) from Definition \ref{pnc} and 
\begin{enumerate}
   \item[(Ord)]
\emph{
Assume that  $\prod _{ \alpha  <\delta } a_\alpha  $ is defined,
 $\pi: \delta  \to \eta$ is a surjective  order preserving map  and,
 for every $j < \eta$,  
let $I_j = \{ \alpha < \delta \mid \pi ( \alpha ) = j \}
= [\beta_j, \beta_j + \varepsilon_j )  $, for appropriate
uniquely determined $\beta_j $ and $  \varepsilon_j$.
Whenever the above assumptions are met,
we require that  
all the products  in
the following equation are defined
\begin{equation*}
\prod _{ \alpha  < \delta } a_ \alpha  = \prod _{j < \eta} 
\prod _{ \gamma < \varepsilon_j } a _{\beta_j + \gamma }
  \end{equation*}     
and equality actually holds.
}
   \end{enumerate} 

If $\prod _{ \alpha  <\delta } a_\alpha  $ is defined 
for every ordinal $\delta$, we will speak of an \emph{ordinal semigroup}.  
If $\delta$ is an ordinal, a \emph{${<}\gamma$-semigroup} 
(a  \emph{${\leq}\gamma$-semigroup})
 is a partial ordinal semigroup such that 
  $\prod _{ \alpha  < \delta } a_\alpha  $ is defined for every 
nonzero ordinal $\delta <  \gamma $ ($\delta \leq \gamma $) and,
just to formally keep uniqueness in the following results, is never 
defined for larger ordinals. 
 \end{definition}

\begin{example} \labbel{ex}  
 The class of the ordinals 
with transfinite sum is an ordinal semigroup; see
clause (4) in \cite[p. 51]{bach}.
The class of ordinal-indexed transfinite strings
with elements from
some nonempty fixed set $X$
is an ordinal semigroup; see
Karp \cite[Chapter II]{Karp}.
In both cases, if we restrict ourselves to 
ordinals (words of length) $< \lambda $, for $\lambda$ an infinite
regular cardinal, we get a
${<}\lambda$-semigroup. 
A general theory for $ \omega_1$-semigroups
(${<} \omega_1$-semigroups in the present terminology)
appears in \cite{BR}. 
As another example,
$ \omega$-semigroups in the sense
from  \cite[4.1]{PP} can be seen as 
a special class of partial ordinal semigroups
in the present terminology. See 
\cite[Example 4.3]{smgrnc} for details.
 \end{example}

\begin{remark} \labbel{strfrm}    
(a) Strictly formally, a partial ordinal semigroup in the sense
of Definition \ref{pord} is not an infinitary semigroup
in the sense of Definition \ref{pnc}, since we may take $\pi$
to be an order preserving  bijection from $\delta$ to some well-ordered
set $J$ isomorphic to $\delta$, but not an ordinal;
in this case $\prod _{j \in J} a_ { \pi ^{-1} (j) }    $ is not defined, according
to Definition \ref{pord}, so that (N) would fail.    
Of course, this is just hair-splitting pedantry: given a partial ordinal semigroup $S$ in the sense
of Definition \ref{pord},
it could be expanded to an infinitary  
semigroup in the sense of Definition \ref{pnc}
by setting all $\prod _{j \in J} a_ { \pi ^{-1} (j) }    $ as above
to be defined and equal to $\prod _{ \alpha  < \delta } a_\alpha  $.
 See Appendix I in \cite{smgrnc} for full details.

(b) In this note Definition \ref{pord} is enough for all 
purposes, we have recalled property (N)
for motivation and since it is somewhat simpler.
In any case, according the  comment in (a), we feel free to write,
say,
$\prod _{\beta_j \leq  \gamma <  \varepsilon'_j } a _{\gamma }$,
or even $\prod _{\pi(i) =j }  a_i$ 
in place of 
$\prod _{ \gamma < \varepsilon_j } a _{\beta_j + \gamma }$
in the situation from Clause (Ord), where $\varepsilon'_j =
\beta_j  +\varepsilon $.
Note that, if $\beta_j >0$, the interval   
$[\beta_j, \beta_j + \varepsilon_j )   $ 
is well-ordered, but, strictly speaking, not
an ordinal.

(c)
If in Definition \ref{pnc} $\prod _{i \in I} a_i $ is defined for every  
$I$ such that  $ \vert   I  \vert = 3  $,
then by (N)
we get  the structure of a semigroup in the classical sense.
In a classical semigroup all finite products are defined, hence,
up to remark (a) above, in the present terminology we have that 
a ${\leq} 3$-semigroup can be uniquely extended to a
${< }\omega $ semigroup (this amounts to prove the finite
generalized associative property).
Thus our main result Theorem \ref{ordg}   is an infinitary generalization
(with a much more involved proof!)
of the above remark.
\end{remark}

\section{The main extension theorem} \labbel{ext}

\begin{theorem} \labbel{ordg}
Let $\gamma$ be an infinite ordinal.

If $(S, \prod)$ is a partial ordinal semigroup such that 
 $\prod _{i < \delta   } a_i $ is defined exactly for every 
nonzero ordinal $\delta \leq \gamma $,   then the operation
$\prod$ can be uniquely expanded to an operation $\prod^*$
(without extending the set $S$) such that 
$\prod^* _{i < \delta   } a_i $ is defined exactly for every 
nonzero ordinal $\delta$ such that $ |\delta| = | \gamma |$
and in such a way that the operations agree when both defined.

More formally, in the terminology from 
Definition \ref{pord},
if $\mu =  \vert  \gamma \vert  ^+$ is the successor of 
the cardinality of $\gamma$,
then
any ${\leq} \gamma$-semigroup $S$ 
can be uniquely expanded 
to
 a ${<} \mu  $-semigroup 
in such a way that only new products are defined, no new element is added to $S$.
 \end{theorem}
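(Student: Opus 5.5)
The plan is to build $\prod^*$ by transfinite recursion on $\delta<\mu$, and to prove (Ord) for all lengths $<\mu$ simultaneously by induction on $\delta$. The basic fact used throughout is this: if $\gamma\le\delta<\mu$ then $|\delta|=|\gamma|$, so $\cf(\delta)\le|\gamma|\le\gamma$. Hence every such $\delta$ is a successor, or it splits into at most $\cf(\delta)$ consecutive blocks, each of length $<\delta$.

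\emph{Uniqueness.} Any expansion satisfying (Ord) and (U) must satisfy two equations. If $\delta=\delta'+1$, then $\prod^*_{\alpha<\delta}a_\alpha=(\prod^*_{\alpha<\delta'}a_\alpha)*a_{\delta'}$. If $\delta$ is a limit, let $\kappa=\cf\delta$ and let $(\beta_\xi)_{\xi<\kappa}$ be strictly increasing and cofinal in $\delta$ with $\beta_0=0$. Then
\[\textstyle\prod^*_{\alpha<\delta}a_\alpha=\prod_{\xi<\kappa}\prod^*_{\beta_\xi\le\alpha<\beta_{\xi+1}}a_\alpha .\]
In both equations the outer product has length $2$ or $\kappa\le\gamma$, so it is an original product, and the inner products are shorter. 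Induction on $\delta$ therefore gives uniqueness.

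\emph{Existence.} I would take the two displayed formulas as the definition for $\gamma<\delta<\mu$. Where $\delta$ is a limit, this first requires showing that the value does not depend on the chosen cofinal sequence.
\begin{itemize}
\item Two cofinal sequences of order type $\kappa$ ($\kappa$ regular) have a union which is again cofinal of order type $\kappa$, and which refines both. So it suffices to compare a sequence with a refinement.
\item For a refinement, each coarse block $[\beta_\xi,\beta_{\xi+1})$ is a union of fewer than $\kappa$ fine blocks, hence of at most $\gamma$ of them.
\item By the induction hypothesis ((Ord) for lengths $<\delta$), each coarse inner product equals the product of its fine inner products.
\item It then remains to apply (Ord) inside the original ${\le}\gamma$-semigroup to the length-$\kappa$ product of fine blocks, grouped along the coarse partition.
\end{itemize}
Both cases use associativity at lengths already handled. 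One must also check that when $\delta\le\gamma$ the formulas reproduce the original products; this is just (Ord) of $S$, so $\prod^*$ indeed extends $\prod$.

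\emph{Verifying (Ord) at $\delta$.} Let $\pi:\delta\to\eta$ be surjective and order preserving with blocks $I_j$; then $\eta\le\delta$. I would argue by cases on $\eta$.
\begin{itemize}
\item If $\eta$ is a limit, choose a cofinal sequence in $\eta$ of length $\cf\eta$ and pull it back via $\pi$ to a cofinal sequence in $\delta$. This is legitimate because, by the independence already shown, we may compute $\prod^*_{\alpha<\delta}$ using this pulled-back partition. Each of its blocks is a union of consecutive $I_j$ and has length $<\delta$, so the induction hypothesis rewrites it as a product over $j$ of the block products. Comparing with the definition of $\prod^*_{j<\eta}$, computed along the same cofinal sequence of $\eta$, gives the equality.
\item If $\eta=\eta'+1$, split off the last block $I_{\eta'}$. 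If it is a proper final segment of $\delta$, apply a two-block decomposition together with induction on the initial part.
\end{itemize}
The case $\eta=1$ is trivial.

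The main obstacle I expect is bookkeeping in the degenerate configurations. One is a last block $I_{\eta'}$ that is a tail of limit type as long as $\delta$ itself. Another is $\delta$ a limit with $\eta$ a successor, where one must show that the two-block splitting agrees with the cofinal-sequence definition. Here I would first prove, as a separate lemma, that for limit $\delta$ and any $\beta<\delta$,
\[\textstyle\prod^*_{\alpha<\delta}a_\alpha=\prod^*_{\alpha<\beta}a_\alpha*\prod^*_{\beta\le\alpha<\delta}a_\alpha .\]
The lemma follows by choosing a cofinal sequence through $\beta$ and using (Ord) of the original structure for the length $1+\kappa$ (which is $\le\gamma$) regrouping. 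A secondary subtlety is when $\eta>\gamma$, since then $\prod^*_{j<\eta}$ is itself new; here the simultaneous induction is essential, since $\eta\le\delta$ and the case $\eta=\delta$ (all blocks singletons) is trivial.
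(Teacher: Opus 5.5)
Your plan follows the paper's proof. It uses the same two forced formulas and proves independence from the cofinal sequence by taking the union of two sequences and computing along a refinement. It then verifies (Ord) at a limit $\delta$ in two ways: through a cofinal sequence passing through $\beta=\min I_{\eta'}$ (your splitting lemma is the paper's Case (a) of Section 5), or through the pull-back along $\pi$ of a cofinal sequence of $\eta$ (the paper's Case (b)). At a successor $\delta=\delta'+1$ your lemma does not apply. There the split $\prod^*_{\alpha<\delta}a_\alpha=\prod^*_{\alpha<\beta}a_\alpha*\prod^*_{\beta\le\alpha<\delta}a_\alpha$ for $\beta<\delta'$ has to be obtained as in the paper's Section 4. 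It comes from (Ord) at $\delta'$, binary associativity, and the successor formula on the tail, which may itself have length $\delta$.

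Two corrections are needed.

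First, your cofinal sequences must also be closed, as in the paper's (P$_\delta$). Suppose $\cf\delta>\omega$ and $\beta_\nu>\sup_{\xi<\nu}\beta_\xi$ for some limit $\nu$. Then the points of $[\sup_{\xi<\nu}\beta_\xi,\beta_\nu)$ lie in no block $[\beta_\xi,\beta_{\xi+1})$, so the limit formula is neither forced by (Ord) nor an acceptable definition. With closedness everything goes through, because three operations preserve closedness: taking the union of two closed cofinal sets, adding the single point $\beta$, and forming the pull-back $\beta_\xi=\min\pi^{-1}(\{\zeta_\xi\})$ of a closed sequence $(\zeta_\xi)$ in $\eta$.

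Second, $\eta=\delta$ does not force singleton blocks. For $\gamma=\omega$ and $\delta=\omega\cdot2$, let $\pi(0)=0$, let $\pi(n)=n-1$ for $1\le n<\omega$, and let $\pi$ be the identity on $[\omega,\omega\cdot2)$; then $\eta=\delta$ but $\pi^{-1}(\{0\})=\{0,1\}$. This is harmless, because your two cases already cover $\eta=\delta$. You only need to read $\prod^*_{j<\eta}$ off the stage-$\delta$ definition, whose independence from the cofinal sequence you prove before checking (Ord).
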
 

\begin{proof} 
Let $(S,   \prod^ \gamma )$ be a  ${\leq} \gamma $-semigroup.
It is enough to prove the following statement.

  \begin{enumerate}
 \item[(*)]  For every $\delta \geq \gamma $ such that 
$ \vert  \delta \vert   =  \vert  \gamma \vert  $,  $(S,   \prod^ \gamma )$ can be 
uniquely expanded 
to a 
${\leq} \delta $-semigroup $(S,   \prod^ \delta ) $.
  \end{enumerate}

Indeed, suppose that we can prove (*), $\delta \geq \varepsilon 
\geq \gamma $ and 
$ \vert  \delta \vert   =  \vert   \varepsilon  \vert   = \vert  \gamma \vert  $.
Applying (*) twice to both $\delta$ and $\varepsilon$ in place of $\delta$,
we get a ${\leq} \delta $-semigroup $(S,   \prod^ \delta )$
and a ${\leq} \varepsilon  $-semigroup $(S,   \prod^ \varepsilon )$.
If we restrict $ \prod^\delta $ to those products
which are 
on sequences of order-type $ \leq\varepsilon$,
 we get necessarily
 $\prod^ \varepsilon  $, by the uniqueness condition in  (*).
In other words, the sequence $( \prod^ \delta) _{ \delta <  \vert   \gamma  \vert  ^+}  $ is coherent,
hence we can join all the operations in order to get
 a  ${<}  \vert  \gamma \vert  ^+ $-semigroup $(S,   \prod^*)$.
Property (Ord) is satisfied in $(S,   \prod^*)$
since, given a sequence as in (Ord), the condition can be evaluated
already in $(S,   \prod^ \delta )$, since $\pi$ is assumed to 
be surjective, hence $\eta \leq \delta $ and also each
$ \varepsilon _j \leq \delta $.  

Satisfaction of Property (U) is straightforward
 here and in 
all the  arguments below.

So let us prove (*).
The proof goes by transfinite induction on those $\delta $'s 
such that   $\delta \geq \gamma $
and $ \vert  \delta \vert   =  \vert  \gamma \vert  $.
The proof will go through the next two
sections. 

The basis 
of the induction  $\delta = \gamma $ is straightforward:
$( S, \prod^ \gamma )$ itself is the only possible expansion
(defining no additional product) and by assumption it
is  a ${\leq} \gamma$-semigroup.

\section{Proof of the successor step} \labbel{succ} 

Suppose that $\delta = \varepsilon +1$.
Recall that an ordinal is the set of smaller ordinals, 
thus if $\delta= \varepsilon +1$, then $\delta = \{ \, \eta \mid
\eta \text{ an ordinal, } \eta \leq \varepsilon   \, \} $.
In particular, $\varepsilon$ is the maximal element of $\delta$.  
By the inductive hypothesis,  $( S, \prod^ \gamma )$ 
 can be uniquely expanded to a
 ${\leq} \varepsilon  $-semigroup $(S, \prod)$.
Here we write $\prod$ in place of $\prod ^ \varepsilon $ for notational simplicity.  
We want to extend $\prod$ to an operation $\prod'$ which is 
always defined on  $\delta$-indexed sequences of elements of $S$.
If $( a_i) _{i < \delta } $ is a sequence of elements from $S$, 
and  (U) from Definition \ref{pnc},
(Ord) from Definition \ref{pord} 
have to be satisfied, then   necessarily
\begin{equation}\labbel{e}     
\sideset{}{'}\prod_{i < \delta  } a_i = 
\left( \prod _{i < \varepsilon  } a_i \right)  * a_ \varepsilon  
 \end{equation}
This is shown to be necessary by taking
$\pi: \delta \to \{ 0,1\} $ such that 
$ \pi(i)=0$, for every  $i < \varepsilon $, and
$\pi( \varepsilon  )= 1$.
Note that  $\prod _{i < \varepsilon  } a_i$ is defined,
 since,  by assumption, $(S, \prod)$ is a ${\leq} \varepsilon  $-semigroup.
Moreover the binary operation $*$ is defined already in $( S, \prod^ \gamma )$, since 
we have assumed that $\gamma$ is infinite. Hence if 
$(S, \prod)$ can be expanded to a 
${\leq} \delta   $-semigroup $(S, \prod')$, then
the expansion is unique. 

 Since, by the inductive assumption, $(S, \prod)$
 is unique with respect to $( S, \prod^ \gamma )$, then 
$(S, \prod')$, if actually a
${\leq} \delta   $-semigroup, is also the unique expansion
 of $( S, \prod^ \gamma )$.  

We now take equation \eqref{e}
as a definition, simultaneously for every 
$\delta$-indexed sequence.
Of course, we let $\prod'$ coincide with 
$\prod$ for sequences indexed by nonzero ordinals $\leq \varepsilon $. 
We have to show that (Ord) is satisfied.
By the inductive hypothesis,
 (Ord) is satisfied for sequences of length $\leq \varepsilon$.
Note that, in particular, we have that
\begin{equation}\labbel{e2}     
\sideset{}{'}\prod _{i < \eta } a_i = 
\left(  \prod _{i < \zeta  } a_i \right)  * a_ \zeta  
 \end{equation}
whenever $\eta = \zeta +1 \leq \delta $. 
This follows from equation \eqref{e}, which is now a definition,
if $\eta= \delta $, and, if   $\eta < \delta $, 
it follows from the same argument
justifying \eqref{e}, since 
(Ord) holds in $(S, \prod)$ by
the inductive hypothesis.

So let $( a_i) _{i < \delta } $
be a
sequence of elements of $S$ and
 $\pi: \delta \to \eta $ 
be an order-preserving  surjection. 
Since $\pi$ is  order preserving
and $\delta$ has a maximum, 
$\eta$ has a maximum, too, thus $\eta= \zeta +1$,
for some ordinal $\zeta$. The proof of the successor step
is now split into two cases. 

\subsection{Case (a)} \labbel{subsub} 
 First suppose that $ \pi ^{-1}( \{ \zeta \} ) = \{ \varepsilon \}  $
and 
let $\pip : \varepsilon \to \zeta $ be
the restriction of $\pi$ to $\varepsilon$.
Note that 
$\pip$ is surjective since, by assumption,
$\pi$ is surjective, and  $ \pi ^{-1}( \{ \zeta \} ) = \{ \varepsilon \}  $.
For $j < \eta $, let 
$b_j=\prod _{\pi(i) =j }  a_i$,
where we have used the conventions from
Definition \ref{pnc} and Remark \ref{strfrm}(b).
Note that, as already observed in a similar situation,
 for each $j < \eta $, the set $\{ \, i < \delta  \mid \pi(i) =j  \,\}$ 
has order type $\leq \varepsilon $, hence 
$\prod _{\pi(i) =j } a_i$ is defined.

By our assumption and
property (U), we have that
$b_ \zeta = a_ \varepsilon $ and,
for $j < \zeta $, we also have
$b_j=\prod _{\pip(i) =j }  a_i $,
since, by construction,  if $j < \zeta $,
then $\{ i < \delta \mid \pip(i) =j\} = \{ i < \delta \mid \pi(i) =j\}$. 
 We perform the following computations, where, for convenience, 
we denote  by $=^{\text{def}} $ an identity which follows from
the above positions and considerations. We denote by $= ^{ \eqref{e}}$, 
$= ^{\text{(Ord)}}$, etc.,
 an identity which follows from the corresponding equations, 
true by the above remarks, provided the index sets are checked to
satisfy the appropriate hypotheses.
\begin{gather}   
\sideset{}{'}\prod _{i < \delta  } a_i 
= ^{ \eqref{e} } 
\left( \prod _{i < \varepsilon  } a_i \right)  * a_ \varepsilon  
= ^{\text{(Ord)}}
\left(
  \prod _{j < \zeta } \prod _{\pip(i) =j }  a_i 
\right)
 * a_ \varepsilon 
=^{\text{def}} 
\left(  \prod _{j < \zeta } b_j \right) 
 * b_ \zeta 
 \\
\labbel{4}
\sideset{}{'}\prod _{j < \eta } 
  \sideset{}{'}\prod _{\gamma <  \varepsilon _j }  a _{ \beta _j + \gamma }  
=^{\text{def}}  
 \sideset{}{'}\prod _{j < \eta } \prod _{\pi(i) =j }  a_i 
=^{\text{def}} 
  \sideset{}{'}\prod _{j < \eta }b_j
= ^{\eqref{e2}} 
\left( \prod _{j < \zeta } b_j \right)
 * b_ \zeta 
 \end{gather}
where, as in Definition \ref{pord}, 
we have set $ \{ i < \delta \mid \pi ( i) = j \}
= [\beta_j, \beta_j + \varepsilon_j )  $. 

In the last identity in \eqref{4} 
we have to resort to \eqref{e2}, since
both the possibilities
$\eta= \delta $ and
  $\eta <  \delta $ might occur.
In any case, $\prod' _{i < \delta  } a_i  = 
  \prod' _{j < \eta } \prod' _{\gamma <  \varepsilon _j }  a _{ \beta _j + \gamma }  $
is proved   in case (a).

\subsection{Case (b)} \labbel{caseb1}  Suppose that case (a) fails, that is, 
$ \pi ^{-1}( \{ \zeta  \}) \supset  \{ \varepsilon \}  $ properly. Recall that 
$\delta = \varepsilon +1$, 
$\eta= \zeta +1$
and that 
 $\pi$ is  surjective and  order preserving
from $\delta$ to $\eta$.
Let again $\pip$ be the restriction 
of $\pi$ to $\varepsilon$. Note that in the present case
the image of $\pip$ is the whole of $\eta$.  
In particular, $\eta \leq  \varepsilon $,
hence a product of an $\eta$-indexed sequence is 
$ \prod$-defined (this is a shorthand to mean that the product is defined according 
to $\prod$).    

For $j < \eta $, let 
$b_j=\prod _{\pip(i) =j }  a_i $ and
$b'_j=\prod' _{\pi(i) =j }  a_i $. Recall that, in view of Remark \ref{strfrm}, 
here and below we are free to write $\prod' _{\pi(i) =j }  a_i $
in place of the more formal 
$\prod' _{\gamma <  \varepsilon _j }  a _{ \beta _j + \gamma }$,
for appropriate $\varepsilon _j$ and $\beta_j$.  

As in case (a), if $j < \zeta$ 
then $b_j=b'_j$. 
On the other hand, notice that $i=\varepsilon$ is not 
in the domain of $\pip$, hence  the factor $a_ \varepsilon$
 does not appear
in $b_ \zeta=\prod _{\pip(i) = \zeta }  a_i $, while it appears in 
$b'_ \zeta=\prod' _{\pi(i) = \zeta }  a_i $.
Hence it is possible, indeed  likely, that 
$b_ \zeta \not= b'_ \zeta$.
Moreover, it is possible that
$ K= \{ i < \delta \mid \pi(i) =\zeta\}$ has order-type
$\delta$. If this is case, the product
defining  $b'_ \zeta $, that is, 
$\prod' _{i \in K }  a_i $,
is not an operation of  $(S, \prod)$, but it is 
just a special case of the definition given by equation \eqref{e}.
It is important to notice that in the computations below, 
though we will use the fact that $b'_ \zeta $ is always a well-defined
element of $S$, we will \emph{not} use the assumption 
(which might be false) that 
$\prod _{i \in K }  a_i $ is defined.
Finally, notice that 
$\varepsilon$ is the maximum of $K$,
whence
\begin{equation}\labbel{dubal}      
b'_ \zeta =^{\text{def}} 
\sideset{}{'}\prod _{i \in K }  a_i = ^{\eqref{e2}}
\left( \prod _{i \in K \setminus \{ \varepsilon  \} }  a_i \right)  * a_ \varepsilon
=^{\text{def}}
  b_ \zeta  *  a_ \varepsilon
  \end{equation}
With the above provisions, we now perform the following computations.

\begin{multline*}     
\sideset{}{'} \prod _{i < \delta  } a_i 
= ^{ \eqref{e} } 
\left( \prod _{i < \varepsilon  } a_i \right)  * a_ \varepsilon  
= ^{\text{(Ord)}}
\left(
  \prod _{j < \eta } \prod _{\pip(i) =j }  a_i 
\right)
 * a_ \varepsilon 
=^{\text{def}} 
\left(  \prod _{j < \eta } b_j \right) 
 *  a_ \varepsilon 
 \\  
= ^{\text{(Ord)}}
 \left( \left( 
 \prod _{j < \zeta } b_j \right)  * b_ \zeta 
\right)
 *  a_ \varepsilon 
= ^{\text{(Ord)}}
\left(  \prod _{j < \zeta } b_j \right) * 
( b_ \zeta  *  a_ \varepsilon )
\\
=^{\text{def},\eqref{dubal}}
\left(  \prod _{j < \zeta } b'_j \right) * 
 b'_ \zeta  =^{\eqref{e2}}
\sideset{}{'}\prod_{i < \eta   } b'_j =^{\text{def}}
 \sideset{}{'}\prod _{j < \eta } \sideset{}{'}\prod _{\pi(i) =j }  a_i
\end{multline*}

\section{Proof of the limit step} \labbel{lim} 
\subsection{Equivalent definitions} \labbel{eqd} 

Now assume that $\delta$ is a limit ordinal. 
By the inductive hypothesis,
for every $\varepsilon < \delta $,
$(S, \prod^ \gamma )$  can be expanded uniquely 
to a
 ${\leq} \varepsilon   $-semigroup $(S, \prod^ \varepsilon )$.
By the same  coherence argument exploited in Section \ref{ext}
and by the uniqueness of each  $(S, \prod^ \varepsilon )$, we have that
$(S, \prod^ \gamma )$  can be expanded uniquely 
to a
${<} \delta   $-semigroup $(S, \prod)$.
We need to expand $\prod$ to an operation $\prod'$ which applies also
to sequences of order-type $ \delta$.  
 Since $ \vert  \delta \vert   \leq \gamma $, we have  
$\cf\delta \leq \gamma $. Fix a sequence $(\beta_j) _{j < \cf \delta } $
 satisfying the following property: 
  \begin{enumerate}
    \item[(P$_{ \delta }$)]
The sequence 
$(\beta_j) _{j < \cf \delta  } $ is closed, strictly increasing and cofinal 
in $\delta$. Moreover $\beta_0=0$.
  \end{enumerate}

In presence of a sequence as above, we will introduce the following notation.
\begin{enumerate}
    \item[(D$_{ \delta }$)]
Let $\lambda =\cf\delta  $.
For every $i < \delta $,
there is a unique $j_i  < \lambda$  
  such that  
$i \in I _{j_i}= [ \beta _{j_i},  \beta _{j _{i}+1 }) $. 
Let $p: \delta \to \lambda $ be defined by
setting $p(i )= j_i $.
  \end{enumerate}
  
If $( a_i) _{i < \delta } $ is a sequence of elements of  $S$, 
and (Ord) from Definition \ref{pord}  
has to be satisfied, then we must have
\begin{equation}\labbel{f}     
\sideset{}{'}\prod _{i < \delta  } a_i =
 \prod _{j < \lambda  } \prod 
_{i \in I_j}
 a_i   
 \end{equation}
since $I_j = \{ i \in I \mid p(i) =j \} $. 
All products on the right-hand side
of equation \eqref{f} are defined,
since 
$(S, \prod)$
 is a ${<} \delta   $-semigroup,
$ \lambda = \cf\delta \leq  \gamma < \delta $ and 
$I _{j}= [ \beta _{j},  \beta _{j+1 }) \subseteq \beta _{j+1 } $
has order-type $\leq \beta _{j+1 } < \delta $.
Hence if $(S, \prod)$ can be expanded to 
a ${\leq} \delta   $-semigroup,
then the expansion has to satisfy equation \eqref{f},
hence it is unique. Note for later use
that,
 by (Ord), equation \eqref{f} is satisfied, under the corresponding assumptions,
also for sequences indexed by some limit ordinal $\delta' < \delta $
and with the product $\prod$ everywhere. 

We can use \eqref{f} in order 
to define  
$\prod '_{i < \delta  } a_i $
for every sequence $( a_i) _{i < \delta } $
of elements of $S$.
Though the definition is relative to the  sequence
  $(\beta_j) _{j < \cf \delta } $ that we have chosen,
we will show in the following claim that the definition is independent from
the sequence.

\begin{claim} \labbel{claim}    
 The outcome of the right-hand
side of \eqref{f} does not depend on the choice of 
the sequence $(\beta_j) _{j < \cf \delta  } $
provided that the sequence   satisfies (P$_ \delta $).
In particular,  we can use equation \eqref{f} as a definition
independent from the chosen sequence.
\end{claim}

To prove the claim, 
let 
  $(\beta''_j) _{j < \cf \delta  } $
be another sequence which satisfies (P$_ \delta $).
If we consider the set-theoretical union
  $B =\{ \beta_j \mid  j < \lambda \} \cup  \{ \beta''_j \mid  j < \lambda \}  $,
then we can arrange 
the elements of $B$ in increasing order,
to form another sequence
$(\beta'_j) _{j < \cf \delta } $
which still satisfies (P$_ \delta $).
The only nontrivial property to be checked is that 
$B$ has actually order-type $\cf\delta$.
Since   $\cf\delta = \lambda $ is an infinite cardinal, 
each initial segment of both 
$\{ \beta_j \mid  j < \lambda \}$ and $\{ \beta''_j \mid  j < \lambda \}$
has order-type $<\lambda$.
Since    $(\beta''_j) _{j < \cf \delta  } $ and
 $(\beta_j) _{j < \cf \delta  } $
are mutually cofinal,  
each initial segment of 
$B$ has order-type $<\lambda$,
hence 
$B$ has order-type $\lambda = \cf \delta $, 
hence the index set of 
 $(\beta'_j) _{j < \cf \delta } $ is the appropriate one.

Thus  $(\beta_j) _{j < \cf \delta } $  is a \emph{subsequence}
of  $(\beta'_j) _{j < \cf \delta } $ in the sense 
that every element  of the first sequence appears in the second sequence,
possibly with a different index.
If we show that equation \eqref{f} gives the same outcome
when applied to some subsequence (in the above sense)
of a given sequence, then  \eqref{f} gives the same outcome
in the case of both $(\beta_j) _{j < \cf \delta } $
and  $(\beta'_j) _{j < \cf \delta } $ as above.
By the same result, the outcome is the same relative to
$(\beta''_j) _{j < \cf \delta } $ and to $(\beta'_j) _{j < \cf \delta } $,
and so it is the same relative to $(\beta_j) _{j < \cf \delta } $
 and to $(\beta''_j) _{j < \cf \delta } $.
Thus, in order to prove the claim, 
it is enough to prove it in the particular case of two sequences
such that 
one  is a subsequence of the other.

So suppose that
$(\beta_k) _{k < \cf \delta } $  is a subsequence
of  $(\beta'_j) _{j < \cf \delta } $ and that
both sequences satisfy
(P$_ \delta $).
Let 
$j_i$, $I_j = [ \beta _j, \beta _{j+1} )$ 
and $p$ be defined as in (D$_ \delta $),
relative to the sequence $(\beta_j) _{j < \cf \delta } $.  
Similarly, relative to the sequence
$(\beta'_k) _{k < \cf \delta } $ and 
for every 
 $i < \delta $,
let  $k_i  $ 
 be the unique element of $  \lambda$  such that  
$i \in K _{k_i}= [ \beta' _{k_i},  \beta' _{k _{i}+1 }) $.
Let $q: \delta \to \lambda $ be defined by
$q(i )= k_i $.  
Since $(\beta_k) _{k < \cf \delta } $  is a subsequence
of  $(\beta'_j) _{j < \cf \delta } $,
 for every $k < \lambda$
there is some 
$j(k) < \lambda$  
such that $K_k \subseteq I _{ j(k)} $.
Moreover, for every $j < \lambda$,  
 $\{ K_k \mid j(k) = j\}$ 
is a partition of $I_j$. 
Let $\pi: \lambda \to \lambda $
be defined by 
$\pi ( \kappa )= j(k)$.  

Let $( a_i) _{i < \delta } $ be a sequence  
of elements of $S$.
The above considerations, together with property (Ord)
in $(S, \prod)$, imply that,
for every $j < \lambda$, we have
$\prod _{i \in I_j  } a_i =
 \prod _{ \pi ( \kappa )= j } \prod _{i \in K_k} a_i   $.
Letting 
$b_k= \prod _{i \in K_k} a_i$
the above formula becomes 
$\prod _{i \in I_j  } a_i =
 \prod _{ \pi ( \kappa )= j } b_k $
and then we get the following identities.
\begin{equation}\labbel{cac}
 \prod _{j < \lambda  } \prod _{i \in I_j} a_i   
=
 \prod _{j < \lambda  }  \prod _{ \pi ( \kappa )= j } b_k  
= ^{\text{(Ord)}}
 \prod _{k < \lambda  }  b_k
=^{\text{def}}
 \prod _{k < \lambda  }  \prod _{i \in K_k} a_i  
  \end{equation}    

Note that all the above subproducts and products are 
defined in $(S, \prod)$, since all the sequences under consideration
have order-type
$<\delta$,
hence we can actually apply (Ord). 
 The left-hand side in equation \eqref{cac}
is the outcome of \eqref{f}
as computed with respect to  
$(\beta_j) _{j < \cf \delta } $ and the 
right-hand side in \eqref{cac}
is the outcome of \eqref{f}
as computed with respect to  
$(\beta'_j) _{j < \cf \delta } $.
The identity in \eqref{cac} thus proves the claim. \qed

Armed with the above claim, we are entitled to use equation \eqref{f}
as a definition for a structure $(S, \prod')$ expanding $(S, \prod)$ by a new 
infinitary operation on $\delta$-ordered sequences. 
It remains to check
that $(S, \prod')$  actually satisfies property (Ord)
from Definition \ref{pord}. Recall that
 we write, say, $\prod' _{\pi(i) =j }  a_i $
for  something like 
$\prod' _{\gamma <  \varepsilon _j }  a _{ \beta _j + \gamma }$.
 
So let $\pi: \delta \to \eta $, for some $\eta$,  
be a surjective order preserving map.
The proof will be again divided into cases.

\subsection{Case (a)} \labbel{IIa} 
 The ordinal $\eta$ is successor.
Say, $\eta= \zeta +1$.
Let $\beta= \inf \pi^{-1}( \{ \zeta \} ) $.
Thus $\eta$ is the image of $\beta+1$ under $\pi$ and,
since $\pi$ is surjective, we have  $\eta \leq \beta +1 < \delta $,
since $\delta$ is limit.
Let  $(\beta_j) _{j < \cf \delta } $ be a sequence 
of elements in $\delta$ satisfying 
(P$_ \delta $). Without loss of generality, by Claim \ref{claim},
we can suppose that $\beta$ is an element of the sequence, 
say, $\beta= \beta _{\bar\jmath} $.
Let 
$j_i$, $I_j = [ \beta _j, \beta _{j+1} )$ 
and $p$ be defined as in  
 (D$_ \delta $), and let $\lambda= \cf \delta $.

We will now introduce some auxiliary elements of $S$.
For $h < \eta $, let $d_h =\prod' _{ \pi(i)= h} a_i   $. 
If $h <\zeta$, then the above product defining $d_h$  is $\prod$-defined,
 since $\{ i < \delta \mid \pi(i)= h \}$ 
has order-type $<\delta$. 
 However, $d_\zeta$ 
might be or might not be $\prod$-defined,
 since   $\{ i < \delta \mid \pi(i)= h \}$
might have order-type  $<\delta$ or
order-type  $\delta$.
In the former case 
$d_\zeta$  is $\prod$-defined;
in the latter case, 
the definition of  
$d_\zeta$  is given by equation \eqref{f}.
As in Subsection \ref{caseb1}, 
we will use the existence of the (unique and well-defined, by Claim \ref{claim}) element  
$d_\zeta$, but we will never use the (possibly false) assumption 
that $d_\zeta$ is given by a $\prod$-defined product.

As the reader has surely already become accustomed to by now,
  the proof proceeds by a number of computations.
 The first computation involves only $\prod$.
\begin{equation}\labbel{7}
\prod _{h < \zeta   } d_h
=^{\text{def}}
 \prod _{h < \zeta   } \prod _{ \pip(i)= h} a_i
= ^{\text{(Ord)}}
 \prod _{i < \beta } a_i   
= ^{\text{(Ord)}}
 \prod _{j < \bar\jmath  } \prod _{i \in I_j} a_i   
  \end{equation}
where in the first application of (Ord)
$\pip$ is the restriction of $\pi$ to $\beta$,  
and the second application of 
(Ord) is obtained
by considering the restriction of $p$ to $\beta$, 
using the fact that 
$p(\beta) = \bar\jmath$ and $\beta$ is minimal with this property.
Note that $i \in I_j$ 
if and only if 
$p(i)=j$. 

We now compute 
$d_ \zeta  =\prod' _{ \pi(i)= \zeta } a_i =
 \prod' _{ \beta \leq i < \delta } a_i  =
 \prod' _{ k < \bar\delta } a _{ \beta +k} $,
where $\bar\delta$ is the only ordinal such that 
$ \beta + \bar\delta = \delta $.
Here and in what follows we are using the fact that
$[ \beta , \delta )$ is well-ordered, hence order isomorphic
to some ordinal, precisely the above-defined $\bar\delta$.
It might well happen that (i) $\bar\delta = \delta $; if this is the case, then
$d_ \zeta $ is given by definition  \eqref{f}.
On the other hand, if (ii) $\bar\delta < \delta $,
then    $d_ \zeta $ is given  by a product defined in $(S, \prod)$.
In case (i) it is necessary  to choose a sequence 
satisfying (P$_ \delta $); by Claim \ref{claim},
we can choose a sequence at will.
For every $ \ell  <  \lambda = \cf \delta = \cf\bar\delta$, 
let $\bar\beta_ \ell $
be the only ordinal such that 
$ \beta _{\bar\jmath + \ell} = \beta  + \bar\beta_ \ell$. 
Since, by assumption, $ \beta = \beta _{\bar\jmath}$, we have 
$\bar\beta_0 = 0$. 
By using the corresponding properties
of the sequence 
 $(\beta_j) _{j < \cf \delta } $, it is easy to check
that 
 $(\bar\beta_\ell) _{\ell < \cf \delta } $
satisfies (P$_{\bar \delta} $),
whether $\bar\delta = \delta $ or not.
Hence $p: \delta \to \lambda $
induces a surjection
 $\bar p: \bar\delta \to \lambda $
defined by letting $\bar p( k)$, for 
$k < \delta$,
to be the unique 
$ \ell$ such that   
$\beta+ k \in [\beta _{\bar\jmath + \ell}, \beta _{\bar\jmath + \ell+1})$. 
 
Hence we can apply  
\eqref{f} in both cases, to get the second identity in the next equation,
setting $\bar I_ \ell 
= \{k < \bar \delta  \mid  \bar p ( k) = \ell  \} =
\{k < \bar \delta  \mid  p ( \beta + k) =  \bar\jmath + \ell \} $
and then noticing that
$j \geq \bar\jmath $ if and only if there is
some $\ell < \lambda $ such that $j=  \bar\jmath + \ell$.
\begin{equation}\labbel{8}    
 d_ \zeta
=^{\text{def}}
\sideset{}{'} \prod _{ k < \bar\delta } a _{ \beta +k} 
=^{ \eqref{f}}
 \prod _{ \ell < \lambda  } \prod _{k\in \bar I_ \ell}  a _{ \beta +k}
=^{\text{def}} 
 \prod _{j \geq \bar\jmath } \prod _{i \in I_j} a_i  
  \end{equation}
For $j < \lambda$, let $c_j =\prod _{i \in I_j} a_i   $. 
Note that the product is defined,
since  $[ \beta _j, \beta _{j+1} )$ has order-type 
$\leq \beta _{j+1}< \delta  $. 
 Now for our final computations
in case (a).
\begin{gather*}     
\sideset{}{'}\prod _{i < \delta  } a_i 
\stackrel{\eqref{f}}{=}   
 \prod _{j < \lambda  } \prod _{i \in I_j} a_i   
\stackrel{\text{def}}{=}   
 \prod _{j < \lambda  } c_j
\stackrel{\text{(Ord)}}{=} 
\prod _{j < \bar\jmath  } c_j *  \prod _{j \geq \bar\jmath } c_j
\stackrel{\text{def}}{=}
 \prod _{j < \bar\jmath  } \prod _{i \in I_j} a_i *  \prod _{j \geq \bar\jmath } \prod _{i \in I_j} a_i  
\\  
 \prod _{h< \eta } \sideset{}{'}\prod _{\pi(i) =h }  a_i
\stackrel{\text{def}}{=}
 \prod _{ h< \eta } d_h
\stackrel{\text{(Ord)}}{=} 
\left(  \prod _{h < \zeta   } d_h \right) 
*
 d_ \zeta 
=^{ (\ref{7}, \ref{8})} 
\prod _{j < \bar\jmath  } \prod _{i \in I_j} a_i *  \prod _{j \geq \bar\jmath } \prod _{i \in I_j} a_i  
\end{gather*}

The above equations show that $(S, \prod')$  satisfies (Ord) in case (a). 

\subsection{Case (b)} \labbel{IIb} 
 The ordinal  $\eta$ is  limit.
Recall that $\pi: \delta \to \eta $ is  surjective
and order preserving.
Let $( \zeta _j) _{j < \cf \eta  } $
be a sequence satisfying (P$_{ \eta } $).
For every $j< \cf \eta $,
let 
$ \beta_j = \inf \pi ^{-1}( \{ \zeta _j \} ) $.
Since $\pi: \delta \to \eta $ is surjective,  
$( \beta _j) _{j <  \cf \eta  } $
is cofinal in $\delta$,
hence $ \cf \delta = \cf \eta $.
It is standard to see that the assumption that 
$( \zeta _j) _{j < \cf \eta   } $
 satisfies (P$_{ \eta } $)
implies that 
$( \beta _j) _{j <  \cf \delta   } $
satisfies (P$_{ \delta  } $).
As usual, let 
$j_i$, $I_j = [ \beta _j, \beta _{j+1} )$ 
and $p$ be defined by
 (D$_ \delta $), relative to the sequence 
$( \beta _j) _{j <  \cf \delta  } $.
By Claim \ref{claim}, we can use
$( \beta _j) _{j <  \cf \delta   } $ in equation \eqref{f} 
 in order to evaluate $\prod' _{i < \delta  } a_i $.
Before performing the final computation, for every $j <  \cf \delta $
define $\pi_j$ to be the restriction of $\pi$ 
to $I_j = [ \beta _j, \beta _{j+1} )$, thus
$\pi_j$ is surjective and  order preserving from 
 $I_j$ to  $J_j=[ \zeta _j, \zeta _{j+1} )$ and,
for $h \in J_j$, we have 
$\{ i < \delta \mid \pi_j(i) =h\} = \{ i < \delta \mid \pi(i) =h\}$.
Setting $b_h=  \prod _{\pi(i)=h} a_i $, for
$h < \eta$, we get 
\begin{equation}\labbel{dubb}      
\prod _{i \in I_j} a_i = ^{\text{(Ord)}} \prod _{h \in J_j} \prod _{\pi(i)=h} a_i 
=^{\text{def}} \prod _{h \in J_j} b_h 
  \end{equation}
for every $j < \lambda $. The next computation finishes
the proof of Case (b) and of  Theorem \ref{ordg}, as well.
\begin{equation*}     
\sideset{}{'}\prod _{i < \delta  } a_i 
=^{ \eqref{f}} 
 \prod _{j < \cf \eta   } \prod _{i \in I_j} a_i   
=^{ \eqref{dubb}} 
\prod _{j < \cf \eta   }  \prod _{h \in J_j} b_h 
= ^{ \eqref{f}}
\sideset{}{'}\prod _{h < \eta } b_h 
=^{\text{def}}
\sideset{}{'}\prod _{h < \eta } \prod _{\pi(i)=h} a_i \qedhere 
 \end{equation*}
\end{proof}

\section{Further remarks} \labbel{fur} 

\begin{remark} \labbel{nonsi}
(a)   We cannot improve the conclusion 
in the last statement in Theorem \ref{ordg} 
to an expansion which is a ${\leq} \mu$-semigroup.
For example, let $\gamma= \omega $, thus $\mu = \omega_1 $
and let $S$ be the set of (finite or) countable ordinals with 
the ordinal sum restricted to sequences indexed by a countable ordinal.
If it is possible to expand the operation
\emph{without adding further elements}, then, say,
$\sum _{i < \omega_1} 1 $ should have a definite value which must
be a countable ordinal, say, $\delta$. Let $\varepsilon$ be a countable
ordinal $> \delta $. Then
$ \delta = \sum _{i < \omega_1} 1 =^{(Ord)} \left(\sum _{i < \varepsilon } 1\right)
+\sum _{ \varepsilon \leq i < \omega_1} 1 = \varepsilon + \delta > \delta  $,
since we have chosen $\varepsilon  > \delta$ and since the interval
$[ \varepsilon , \omega_1)$ has order type $ \omega _1$.   
We have reached a contradiction, hence an expansion as above
does not exist. 
   
(b) If, instead, we allow the possibility of adding 
further elements to the set $S$, then every ${\leq} \gamma $-semigroup
can be extended to an ordinal semigroup, actually, to
a complete infinitary semigroup. First, expand $S$ to a
${<} \mu  $-semigroup in view of Theorem \ref{ordg},
where $\mu =  \vert  \gamma \vert  ^+$.
Then add a fully absorbing element $ \Omega$ 
and set to $ \Omega$ all those  products with index $\geq \mu$ 
(and all those products indexed by a non well-ordered set).  
See \cite[Example 5.4]{smgrnc} for full details. 
\end{remark}

In (b) above we actually need the fact that $\mu $ 
is a regular cardinal.

\begin{proposition} \labbel{singsing}
If $\mu $ is a singular cardinal, then
every ${<}\mu $-semigroup can be uniquely expanded
to a   ${\leq}\mu $-semigroup, hence, by Theorem \ref{ordg},
 to a ${<}\mu^+ $-semigroup. 
\end{proposition}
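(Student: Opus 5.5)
Since $\mu$ is singular, $\lambda=\cf\mu<\mu$. The plan is to repeat the limit step of the proof of Theorem \ref{ordg} with $\delta=\mu$, starting from the given ${<}\mu$-semigroup $(S,\prod)$ in place of the inductively obtained one.

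First, fix a sequence $(\beta_j)_{j<\lambda}$ satisfying (P$_\mu$), and let $I_j$ and $p$ be as in (D$_\mu$). Each $I_j=[\beta_j,\beta_{j+1})$ has order type $\leq\beta_{j+1}<\mu$. The outer index $\lambda$ is $<\mu$, and this is exactly where singularity enters: for regular $\mu$ the outer product would be of length $\mu$ and hence undefined. So every product on the right-hand side of \eqref{f} is $\prod$-defined. Applying (Ord) with $p$ shows that any ${\leq}\mu$-expansion must satisfy \eqref{f}, which gives uniqueness. We then take \eqref{f} as the definition of $\prod'_{i<\mu}a_i$.

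Next, we repeat the arguments of Section \ref{lim} verbatim with $\delta=\mu$, noting the only facts they used. The proof of Claim \ref{claim} merging two closed cofinal sequences used only that $\lambda$ is an infinite cardinal, that every sequence involved has order type $<\delta$, and that (Ord) holds below $\delta$. For case (a) of the (Ord) verification, we need $\eta\le\beta+1<\mu$ and that \eqref{f} is available both for $\bar\delta=\mu$ (the new definition) and for $\bar\delta<\mu$. In the latter case we need that \eqref{f} holds inside $(S,\prod)$ for limit $\bar\delta<\mu$. In the paper this was derived from (Ord), which is available here by hypothesis, so it goes through. Case (b) needs $\cf\eta=\cf\mu=\lambda<\mu$ and the computation \eqref{dubb}, both fine.

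The one place the hypothesis $\cf\delta\le\gamma<\delta$ was used, namely $\lambda<\delta$, is now replaced by $\cf\mu<\mu$. That is the step to check carefully, particularly in case (b). There the final equality $\prod_{j<\lambda}\prod_{h\in J_j}b_h=\prod'_{h<\eta}b_h$ applies \eqref{f} to $\eta\le\mu$, possibly $\eta=\mu$, using the sequence $(\zeta_j)$ with outer length $\lambda<\mu$. This is again legitimate by the definition and Claim \ref{claim}.

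This yields a unique ${\leq}\mu$-semigroup expansion. Finally, apply Theorem \ref{ordg} with $\gamma=\mu$, which is infinite, to expand uniquely to a ${<}\mu^+$-semigroup. Uniqueness of the composite expansion follows because each of the two steps is unique.
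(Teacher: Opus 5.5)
Your proposal is correct and takes the same approach as the paper: the paper's proof likewise sets $\delta=\mu$ in \eqref{f}, reruns the limit-step arguments of Section \ref{lim} with $\cf\mu<\mu$ in place of $\cf\delta\le\gamma<\delta$, and then applies Theorem \ref{ordg} with $\gamma=\mu$. A minor remark: since $\mu$ is a cardinal, $\beta+\bar\delta=\mu$ with $\beta<\mu$ forces $\bar\delta=\mu$, so the subcase $\bar\delta<\mu$ you treat in case (a) never occurs, though covering it does no harm.
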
 

 \begin{proof} 
By taking $\delta= \mu $ in \eqref{f} 
and using exactly the same arguments  in the proof of
the limit step of Theorem \ref{ordg}, as presented in Section \ref{lim}.   
\end{proof}

\end{document}